\newtheorem{theorem}{Theorem}
\newtheorem{corollary}[theorem]{Corollary}
\newtheorem{proposition}[theorem]{Proposition}
\theoremstyle{definition}
\newtheorem{example}[theorem]{Example}
\newtheorem{definition}[theorem]{Definition}
\newtheorem{remark}[theorem]{Remark}
\DeclareMathOperator*{\esssup}{ess\,sup}
\DeclareMathOperator*{\essinf}{ess\,inf}
\newcommand{\VK}{\stackrel{\mathcal{D}}{\longrightarrow}}
\newcommand{\FSK}{\stackrel{a.s.}{\longrightarrow}}
\begin{document}
\bibliographystyle{natbib}

\title{Stochastic orders and  measures of skewness and dispersion based on expectiles}
\author{Andreas Eberl\footnote{andreas.eberl@kit.edu},
Bernhard Klar\footnote{ bernhard.klar@kit.edu} \\
\small{Institute of Stochastics,} \\
\small{Karlsruhe Institute of Technology (KIT), Germany.}
}


\date{\today }
\maketitle

\begin{abstract}
Recently, expectile-based measures of skewness akin to well-known quantile-based skewness measures have been introduced, and it has been shown that these measures possess quite promising properties \citep{ek-exl,ek}. However, it remained unanswered whether they preserve the convex transformation order of van Zwet, which is sometimes seen as a basic requirement for a measure of skewness.
It is one of the aims of the present work to answer this question in the affirmative.
These measures of skewness are scaled using interexpectile distances.
We introduce orders of variability based on these quantities and show that the so-called weak expectile dispersive order is equivalent to the dilation order. Further, we analyze the statistical properties of empirical interexpectile ranges in some detail.
\end{abstract}

{\bfseries Keywords:} Expectile, skewness, stop-loss transform, dispersion order, dilation order, dispersive order, scale measure, asymptotic relativ efficiency.

\section{Introduction}

Over the last years, there was a steady increase in literature dealing with expectiles. These are measures of non-central location that have properties similar to quantiles. Therefore, expectiles can also be used as building blocks for measures of scale, skewness, etc. However, no such measure should be used  without identification of the ordering it preserves.

Let us explain this in more detail. Since the seminal work of \cite{zwet}, \cite{oja}, \cite{macgillivray}, an  axiomatic approach to measure statistical quantities is commonly accepted. It involves two main steps \citep{huerlimann}:
\begin{itemize}
\item
Define stochastic (partial) orders on sets of random variables or distribution functions
that allow for comparisons of the given statistical quantity.
\item
Identify measures of the statistical quantity by considering functionals of distributions that preserve the partial order, and use only such measures in practical work.
\end{itemize}

Given a dispersion or variability ordering $\leq_D$,
the general axiomatic approach requires that a scale measure $\delta$ satisfies
\begin{enumerate}
\item[D1.]
For $c,d\in \mathbb{R}$, $\delta(cX+d)=\lvert c \rvert \delta(X)$.
\item[D2.]
If $X\leq_D Y$, then $\delta(X)\leq \delta(Y)$.
\end{enumerate}
Similarly, given a skewness order $\leq_S$, a skewness measure $\gamma$ should satisfy
\begin{enumerate}
\item[S1.]
For $c>0$ and $d\in \mathbb{R}$, $\gamma(cX+d) = \gamma(X)$.
\item[S2.]
The measure satisfies $\gamma(-X)=-\gamma(X)$.
\item[S3.]
If $X\leq_S Y$, then $\gamma(X)\leq \gamma(Y)$.
\end{enumerate}

The generally accepted strongest dispersion and skewness orders are the dispersive order \citep{bickel4} and the convex transformation order \citep{zwet}, respectively.

Research which treats expectile-based measures and related stochastic orderings includes \cite{bellini,bkmr,bell-et-al,km,ek,ek-exl,arab}. In particular, \cite{ek-exl,ek} introduced expectile-based measures of skewness which possess quite promising properties and have close connections to other skewness functionals. However, it remained unanswered whether these measures preserve the convex transformation order.
It is one of the aims of the present work to answer this question in the affirmative.

As part of these measures of skewness, interexpectile distances (also called interexpectile ranges) appear quite naturally; they have also been used in a finance context by \cite{bellini2018b,bellini2020,bellini2021}. We introduce orders of variability based on slightly more general quantities and show that the so-called weak expectile dispersive order is equivalent to the dilation order. Hence, interexpectile ranges preserve the dispersive ordering.

Up to now, statistical properties of empirical interexpectile ranges do not seem to have been investigated. We show that they are a good compromise between the standard deviation on the one hand and robust measures as the interquartile range on the other.

\smallskip
This paper is organized as follows. In Section \ref{sec-expectile}, we recall the definitions of expectiles and some of their properties.
Section \ref{sec-exp-skew} discusses expectile skewness. It is shown that expectile-based skewness measures are consistent with the convex transformation order. In Section \ref{sec-exp-disp}, we introduce strong and weak expectile dispersive orders and show that the latter is equivalent to the dilation order. It follows that the interexpectile range preserves the dispersive order. These concepts are illustrated in Section \ref{sec-lomax}, using the Lomax or Pareto type II distribution.
Empirical interexpectile ranges are analyzed and compared with other scale measures in Section \ref{sec-exp-scale}, using standardized asymptotic relative efficiencies.

\section{Preliminaries} \label{sec-expectile}

Throughout the paper, we assume that all mentioned random variables $X$ are non-degenerate, have a finite mean (denoted as $X \in L^1$), and are defined on a common probability space $(\Omega,\mathcal{A}, P)$.
Further, we assume that the supports of the underlying distributions are intervals and that these distributions have no atoms. Hence, the distribution function (cdf) $F_X$ of $X$ has a strictly increasing and continuous inverse on $(0,1)$.

Expectiles $e_X(\alpha)$ of a random variable $X\in L^{2}$ have been defined by \citet{newey} as the minimizers of an asymmetric quadratic loss:%
\begin{equation}
e_X(\alpha)=\arg \min_{t\in \mathbb{R}}\left\{ E \ell_\alpha(X-t) \right\}
\text{,}  \label{exp_def_1}
\end{equation}%
where
\begin{equation*}
\ell_\alpha(x) =
\begin{cases}
\alpha x^2, & \mbox{ if } x \ge 0, \\
(1-\alpha) x^2, & \mbox{ if } x < 0,%
\end{cases}%
\end{equation*}
and $\alpha \in (0,1)$. For $X\in L^{1}$ (but $X\notin L^{2}$), equation (\ref{exp_def_1}) has to
be modified \citep{newey} to
\begin{equation}
e_X(\alpha) = \arg \min_{t\in \mathbb{R}} \left\{ E\left[ \ell_\alpha(X-t) -
\ell_\alpha(X) \right]\right\}.  \label{exp_def_3}
\end{equation}
The minimizer in (\ref{exp_def_1}) or (\ref{exp_def_3}) is always unique and
is identified by the first order condition%
\begin{equation}
\alpha E\left( X-e_{X}(\alpha )\right) _{+}=(1-\alpha )E\left(
X-e_{X}(\alpha )\right) _{-}\text{,}  \label{exp_def_2}
\end{equation}%
where $x_{+}=\max \{x,0\}$, $x_{-}=\max \{-x,0\}$.
This is equivalent to characterizing expectiles via an identification function, which, for any $\alpha \in (0, 1)$ is defined by
\begin{equation*}
	I_\alpha(x, y) = \alpha (y - x) \mathbbm{1}_{\{y \geq x\}} - (1 - \alpha) (x - y) \mathbbm{1}_{\{y < x\}}
\end{equation*}
for $x, y \in \mathbb{R}$. The $\alpha$-expectile of a random variable $X \in L^1$ is then the unique solution of
\begin{equation*}
	E I_\alpha(t, X) = 0, \quad t \in \mathbb{R}.
\end{equation*}
Similarly, the {\itshape empirical $\alpha$-expectile} $\hat{e}_n(\alpha)$ of a sample $X_1,\ldots,X_n$ is defined as solution of
\begin{equation*}
	I_\alpha(t, \hat{F}_n) = \frac{1}{n} \sum_{i=1}^{n} I_\alpha(t, X_i) = 0, \quad t \in \mathbb{R}.
\end{equation*}
Like quantiles, expectiles are measures of non-central location and have similar properties (see, e.g., \citet{newey} and \citet{bkmr}).
Clearly, expectiles depend only on the distribution of the random variable
$X$ and can be seen as statistical functionals defined on the set of distribution functions with finite mean on $\mathbb{R}$.

Throughout the paper, we make use of the following stochastic orders:
\begin{definition}
\begin{enumerate}
\item[(i)]
$F_X$ precedes $F_Y$ in the usual stochastic order (denoted by $F_X \leq_{st} F_Y$) if $E\phi(X)\le E\phi(Y)$ for all increasing functions $\phi$, for which both expectations exist.
\item[(ii)]
$F_X$ precedes $F_Y$ in the convex order (denoted by $F_X \leq_{cx} F_Y$) if $E\phi(X) \le E\phi(Y)$ for all convex functions $\phi$, for which both expectations exist.
\item[(iii)]
$F_X$ precedes $F_Y$ in the convex transformation order (denoted by $F_X\leq _{c} F_Y$)
if $F_Y^{-1}\circ F_X$ is convex.
\item[(iv)]
$F_X$ precedes $F_Y$ in the expectile (location) order (denoted by $F_X\leq _{e} F_Y$)
if
\begin{align*}
e_X(\alpha) \leq e_Y(\alpha) \mbox{ for all } \alpha \in (0,1).
\end{align*}
\end{enumerate}
\end{definition}

The first three orders are well-known \citep{shaked,mueller-stoyan}; the expectile (location) order was introduced in \cite{bell-et-al}.
Since the usual stochastic order $\leq_{st}$ is equivalent to the pointwise ordering of the quantiles, the definition of the expectile ordering is quite natural, just replacing quantiles by expectiles.

As noted by \citet{jones}, expectiles are the quantiles of a suitably transformed distribution. Indeed, the first order condition (\ref{exp_def_2}) can be written in the equivalent form
\begin{align} \label{exp-cdf}
\alpha &=\frac{E\left( X-t\right) _{-}}{E\left\vert X-t\right\vert}
=\frac{\pi_X(t)-\mu+t}{2\pi_X(t)-\mu+t}
=:\breve{F}_X(t),
\end{align}
where $\mu=EX$ and $\pi_X(t) = E(X-t)_+$  denotes the  stop-loss transform of $X$. Hence, $\breve{F}_X(t)=e_X^{-1}(t)$, and the expectile order could also be defined by
$\breve{F}_X(t)\geq \breve{F}_Y(t)$ for all $t$.
The expectile order is weaker than the usual stochastic order, i.e. $X\leq _{st}Y$ implies $X\leq _{e}Y$ \citep{bellini}.

\section{Expectile skewness} \label{sec-exp-skew}

In the following, we summarize some results which are important with respect to expectile-based quantification of skewness. \cite{bell-et-al}, Thm.\ 12 and Cor.\ 13, show the following equivalence.

\begin{proposition} \label{prop1}
Let $X,Y\in L^{1}$ with $EX=EY=\mu $, $m=\essinf(X), M=\esssup(X)$. Then the following conditions are equivalent:
\begin{enumerate}
\item[a)]
$X\leq _{e}Y$.
\item[b)]
$\pi _{X}(x)\geq \pi _{Y}(x)$, for each $x\in (m,\mu )$ and
$\pi _{X}(x)\leq \pi _{Y}(x)$, for each $x\in (\mu ,M)$.
\item[c)]
$(X-\mu )_{-}\geq _{cx}(Y-\mu )_{-}$ and
$(X-\mu )_{+}\leq _{cx}(Y-\mu )_{+}$.
\end{enumerate}
\end{proposition}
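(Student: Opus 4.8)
The plan is to establish the two equivalences a) $\Leftrightarrow$ b) and b) $\Leftrightarrow$ c) separately. For a) $\Leftrightarrow$ b) I would exploit the representation (\ref{exp-cdf}): writing $c=\mu-t$, we have $\breve{F}_X(t)=g_t(\pi_X(t))$ with $g_t(s)=\frac{s-c}{2s-c}$, and the same map $g_t$ governs $Y$. Since $g_t'(s)=c/(2s-c)^2$ and $2\pi_X(t)-c=E|X-t|>0$, the map $g_t$ is strictly increasing in $s$ for $t<\mu$, strictly decreasing for $t>\mu$, and constant $=1/2$ at $t=\mu$. Because $X\leq_e Y$ is equivalent to $\breve{F}_X(t)\geq\breve{F}_Y(t)$ for all $t$, monotonicity of $g_t$ lets me read this pointwise inequality as $\pi_X(t)\geq\pi_Y(t)$ for $t<\mu$ and $\pi_X(t)\leq\pi_Y(t)$ for $t>\mu$; restricting to $(m,\mu)$ and $(\mu,M)$ is exactly b).

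To upgrade this to an equivalence I must account for $t\notin(m,M)$. For $t\geq M$ we have $\pi_X(t)=0$ and $\breve{F}_X(t)=1\geq\breve{F}_Y(t)$ trivially, and for $t\leq m$ we have $\breve{F}_X(t)=0$, so $\breve{F}_X(t)\geq\breve{F}_Y(t)$ forces $\breve{F}_Y(t)=0$. I would secure the latter from b): letting $t\to m^+$ in $\pi_X(t)\geq\pi_Y(t)$ and using continuity of the stop-loss transforms together with $\pi_Y(m)\geq E(Y-m)=\mu-m=\pi_X(m)$ yields $\pi_Y(m)=\mu-m$, i.e.\ $\essinf(Y)\geq m$; hence $\breve{F}_Y(t)=0$ for $t\leq m$ as well, closing the loop.

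For b) $\Leftrightarrow$ c) I would invoke the standard characterization \citep{shaked,mueller-stoyan} that $U\leq_{cx}V$ iff $EU=EV$ and $E(U-t)_+\leq E(V-t)_+$ for all $t$. The computation at the heart of this step is that the stop-loss transforms of the one-sided parts collapse onto $\pi_X$: for $t\geq0$ one finds $E((X-\mu)_+-t)_+=\pi_X(\mu+t)$ and $E((X-\mu)_--t)_+=\pi_X(\mu-t)-t$, while for $t<0$ both are affine and agree for $X$ and $Y$ as soon as $\pi_X(\mu)=\pi_Y(\mu)$. Since $E(X-\mu)_+=E(X-\mu)_-=\pi_X(\mu)$ (and likewise for $Y$), the two equal-means conditions both reduce to $\pi_X(\mu)=\pi_Y(\mu)$, which b) provides by continuity at the crossing point $\mu$. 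The stop-loss inequalities then read $\pi_X\leq\pi_Y$ on $[\mu,\infty)$ for the positive parts and, with the order reversed, $\pi_X\geq\pi_Y$ on $(-\infty,\mu]$ for the negative parts, which is precisely b) once the ranges $t\geq M$ and $t\leq m$ are dispatched by the support arguments above.

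The step I expect to be most delicate is not any individual identity but the gluing at the endpoints. Condition b) is stated only on the open intervals $(m,\mu)$ and $(\mu,M)$, whereas a) quantifies over all $t$ and c) over all stop-loss arguments. The crux is therefore the continuity argument that forces the equality $\pi_X(\mu)=\pi_Y(\mu)$ at the crossing and the support inclusion $\essinf(Y)\geq m$; once these are in hand, the inequalities outside $[m,M]$ are automatic and the three conditions line up.
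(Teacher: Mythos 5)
Your proposal is correct, but there is nothing in the paper to compare it against: Proposition \ref{prop1} is imported without proof from \cite{bell-et-al} (Thm.~12 and Cor.~13). As a self-contained substitute your argument works. The reduction of a) to a two-sided stop-loss comparison via $g_t(s)=(s-c)/(2s-c)$, $c=\mu-t$, is sound, since $g_t'(s)=c/(2s-c)^2$ and $2\pi_X(t)-c=E\lvert X-t\rvert>0$ give strict monotonicity of the correct sign on the range containing both $\pi_X(t)$ and $\pi_Y(t)$; this is essentially the same device the paper uses later in the proof of Theorem \ref{theo-dil}, where $h(x)=(x+1)/(2x+1)$ is applied to $\pi_{\tilde X}(t)/t$. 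The identities $E((X-\mu)_+-t)_+=\pi_X(\mu+t)$ and $E((X-\mu)_--t)_+=\pi_X(\mu-t)-t$ for $t\geq 0$ are correct, and you rightly note that the equal-mean requirements of both convex-order statements in c) collapse to the single condition $\pi_X(\mu)=\pi_Y(\mu)$, which follows from b) by continuity of the stop-loss transforms at $\mu$. You also correctly isolated and resolved the only genuinely delicate point, namely that b) is stated only on $(m,\mu)\cup(\mu,M)$ while a) and c) quantify over all of $\mathbb{R}$: on $[M,\infty)$ the inequality is trivial since $\pi_X\equiv 0\leq\pi_Y$, and on $(-\infty,m]$ the combination of $\pi_Y(m)\geq\mu-m=\pi_X(m)$ with the limit of b) forces $\essinf(Y)\geq m$ and hence $\pi_X=\pi_Y$ there. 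No gaps.
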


\cite{ek-exl} introduced a family of scalar measures of skewness
\begin{equation*} 
\tilde{s}_2(\alpha) = \frac{ e_X(1-\alpha)+e_X(\alpha)-2\mu }{ e_X(1-\alpha)-e_X(\alpha) }, \quad  \alpha\in(0,1/2),
\end{equation*}
and called a distribution right-skewed (left-skewed) in the expectile sense if
$\tilde{s}_2(\alpha)\geq (\leq)\, 0$ for all $\alpha\in(0,1/2)$ and equality does not hold for each $\alpha\in(0,1/2)$. If equality holds for each $\alpha$, the distribution is symmetric.
They also defined a normalized version $s_2(\alpha)=\tilde{s}_2(\alpha)/(1-2\alpha)$, and proved that $-1<s_2(\alpha)<1$, and both inequalities are sharp for any $\alpha \in (0, 1/2)$.

Moreover, \cite{ek-exl} introduced a function
\begin{equation*}
S_X(t) = \frac{\pi_X(\mu+t)-\pi_X(\mu-t)}{t} + 1
= \frac{1}{t} \int_{\mu-t}^{\mu + t} F_X(z) dz - 1, \quad t>0,
\end{equation*}
which has been shown to be strongly related to $s_2$:
it holds that $s_2(\alpha)\geq 0$ for $\alpha\in (0,1/2)$ if and only if $S_X(t)\geq 0$ for each $t>0$ \citep{ek-exl}. Based on $S_X$, the following  two skewness orders have been defined, which are both weaker than van Zwet's skewness order $\leq _c$ \citep{ek-exl}.

\begin{definition}
\begin{enumerate}
\item[a)]
Let $\tilde{S}_X(t)=S_X(td_X)$, where $d_X=E\lvert X-EX\rvert$ denotes the mean absolute deviation from the mean (MAD). Then, we write $F_X\leq_{sf} F_Y$ if
$\tilde{S}_X(t) \leq \tilde{S}_Y(t) \, \forall \, t>0$.
\item[b)]
$F_Y$ is more skew with respect to mean and MAD than $F_X$ ($F_X <_\mu^{d} F_Y$), if $F_X(d_X x+EX)$ and $F_Y(d_Y x+EY)$ cross each other exactly once on each side of $x=0$, with $F_X(EX)\leq F_Y(EY)$.
\end{enumerate}
\end{definition}

\smallskip
\cite{arab} introduced a skewness order as follows.

\begin{definition} \label{def-s}
Let $X,Y\in L^{1}$, and define $\tilde{X}=(X-EX)/E\lvert X-EX\rvert, \tilde{Y}=(Y-EY)/E\lvert Y-EY\rvert$, with cdf's $F_{\tilde{X}}$ and $F_{\tilde{Y}}$.
$X$ is smaller than $Y$ in the $s$-order, denoted by $F_X\leq_s F_Y$, if
\begin{align*}
\int_{-\infty}^x F_{\tilde{X}}(t)dt &\geq \int_{-\infty}^x F_{\tilde{Y}}(t)dt, \quad \forall x \leq 0,
\end{align*}
and
\begin{align*}
\int_x^{\infty} \bar{F}_{\tilde{X}}(t)dt &\leq
\int_x^{\infty} \bar{F}_{\tilde{Y}}(t)dt, \quad \forall x\geq 0,
\end{align*}
where $\bar{F}=1-F$ denotes the survival function.
\end{definition}

Using well-known conditions for the convex order (see, e.g., (3.A.7) and (3.A.8) in \cite{shaked}), $X\leq_s Y$ holds if $\tilde{X}_{-}\geq _{cx}\tilde{Y}_{-}$ and
$\tilde{X}_{+}\leq _{cx}\tilde{Y}_{+}$. A comparison with Prop. \ref{prop1}c) yields that
\begin{align} \label{e-s-equiv}
X\leq_s Y  \quad \Leftrightarrow \quad   \tilde{X} \leq _{e} \tilde{Y}
\end{align}
With regard to the standardized variables, (\ref{e-s-equiv}) establishes that larger expectiles correspond to a more skewed distribution.
The equivalence in (\ref{e-s-equiv}) was derived in \cite{arab}, Thm. 14, using a different argument.

Definition \ref{def-s} entails $\int_{-x}^{x} (F_{\tilde{X}}(t)-F_{\tilde{Y}}(t)) dt \leq  0$  for all $x>0$.
However, this is equivalent to  $\tilde{S}_X(x) \leq \tilde{S}_Y(x)$ for all $x>0$. Hence, $\leq_{sf}$, the skewness order defined by $\tilde{S}_X$, is weaker than the  $\leq_s$-order. On the other hand, the proof of Thm.\ 13(2.) in \cite{arab} shows that
$\leq_s$ is weaker than the order $\leq_\mu^d$. Taking into account Thm.\ 3 in \cite{ek-exl}, we obtain the following chain of implications:
\begin{align} \label{order-imp}
X\leq _c Y \;\Rightarrow\; X\leq_\mu^d Y \;\Rightarrow\;
X\leq_s Y \;\Rightarrow\; X\leq_{sf} Y.
\end{align}

The following theorem is the main result in this section, showing that the expectile-based skewness measure $s_2$ (and, hence, $\tilde{s}_2$) preserves van Zwet's skewness order.

\begin{theorem} \label{s2-cons}
Let $X,Y\in L^{1}$. Then, $s_2$ is consistent with $\leq _c$, i.e.
$X\leq _c Y$ implies $s_{2,X}(\alpha)\leq s_{2,Y}(\alpha)$ for each $\alpha\in (0,1/2)$.
\end{theorem}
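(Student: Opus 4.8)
The plan is to exploit the affine invariance of $s_2$ to reduce the assertion to a pointwise comparison of standardized expectiles, and then to close with an elementary monotonicity argument. First I would standardize: writing $\tilde X=(X-EX)/d_X$ and using that expectiles are affine-equivariant, so that $e_{\tilde X}(\alpha)=(e_X(\alpha)-EX)/d_X$, both the numerator and the denominator of $\tilde s_2$ get multiplied by $d_X$, whence
\begin{equation*}
s_{2,X}(\alpha)=\frac{1}{1-2\alpha}\cdot\frac{e_{\tilde X}(1-\alpha)+e_{\tilde X}(\alpha)}{e_{\tilde X}(1-\alpha)-e_{\tilde X}(\alpha)}.
\end{equation*}
Thus $s_{2,X}(\alpha)$ depends on $X$ only through the expectiles of the MAD-standardized variable $\tilde X$, which has mean $0$ and $d_{\tilde X}=1$.

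Next I would convert the convex transformation order into a pointwise order of these standardized expectiles. By the chain of implications (\ref{order-imp}), $X\leq_c Y$ implies $X\leq_s Y$, and by the equivalence (\ref{e-s-equiv}) this is the same as $\tilde X\leq_e\tilde Y$, i.e.\ $e_{\tilde X}(\alpha)\leq e_{\tilde Y}(\alpha)$ for every $\alpha\in(0,1)$. In particular, writing $a=e_{\tilde X}(\alpha)$, $b=e_{\tilde X}(1-\alpha)$, $c=e_{\tilde Y}(\alpha)$ and $d=e_{\tilde Y}(1-\alpha)$, I obtain simultaneously $a\leq c$ and $b\leq d$.

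Finally I would finish by monotonicity. Since $\tilde X$ and $\tilde Y$ have mean zero, $e_{\tilde X}(1/2)=e_{\tilde Y}(1/2)=0$, and expectiles are strictly increasing in $\alpha$; hence for $\alpha<1/2$ one has $a,c<0<b,d$, so all the relevant denominators are positive. For the function $f(x,y)=(y+x)/(y-x)$ one computes $\partial_x f=2y/(y-x)^2>0$ when $y>0$ and $\partial_y f=-2x/(y-x)^2>0$ when $x<0$. Therefore $f(a,b)\leq f(c,b)\leq f(c,d)$, where the first inequality uses $a\le c$ together with $b>0$ and the second uses $b\le d$ together with $c<0$. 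This gives $\tilde s_{2,X}(\alpha)\leq\tilde s_{2,Y}(\alpha)$, and dividing by the positive factor $1-2\alpha$ yields $s_{2,X}(\alpha)\leq s_{2,Y}(\alpha)$ for each $\alpha\in(0,1/2)$.

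I expect the only real subtlety to lie in the second step: the naive pointwise ordering of the unstandardized expectiles does not by itself control a skewness measure, and it is essential to pass to the MAD-standardized variables, for which (\ref{e-s-equiv}) supplies exactly the needed expectile ordering $\tilde X\leq_e\tilde Y$. Once that is in place, the sign information $a,c<0<b,d$ --- which is precisely what makes the ratio jointly monotone in the lower and upper standardized expectiles --- renders the remainder routine.
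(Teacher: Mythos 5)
Your proposal is correct and follows essentially the same route as the paper's proof: both reduce via the chain (\ref{order-imp}) and the equivalence (\ref{e-s-equiv}) to the pointwise ordering $e_{\tilde X}(\alpha)\leq e_{\tilde Y}(\alpha)$ of the MAD-standardized expectiles, and then exploit the sign information $e_{\tilde X}(\alpha)<0<e_{\tilde X}(1-\alpha)$ to conclude. The only cosmetic difference is in the final elementary step, where the paper cross-multiplies to the equivalent inequality $e_{\tilde Y}(1-\alpha)\lvert e_{\tilde X}(\alpha)\rvert\geq e_{\tilde X}(1-\alpha)\lvert e_{\tilde Y}(\alpha)\rvert$ while you verify the joint monotonicity of $(x,y)\mapsto (y+x)/(y-x)$ on the region $x<0<y$; both are valid.
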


\begin{proof}
By (\ref{order-imp}) and (\ref{e-s-equiv}), $X\leq _c Y$ implies $\tilde{X} \leq _{e} \tilde{Y}$, i.e. $e_{\tilde{X}}(\alpha) \leq e_{\tilde{Y}}(\alpha)$ for all $\alpha\in (0,1)$. A straightforward computation shows that
$s_{2,\tilde{X}}(\alpha)\leq s_{2,\tilde{Y}}(\alpha)$ is equivalent to
\begin{align*}
e_{\tilde{Y}}(1-\alpha) e_{\tilde{X}}(\alpha) \leq
e_{\tilde{X}}(1-\alpha) e_{\tilde{Y}}(\alpha),
\quad \forall \alpha\in(0,1/2),
\end{align*}
which, in turn, is equivalent to
\begin{align} \label{exp-ord-equiv}
e_{\tilde{Y}}(1-\alpha)\, \lvert e_{\tilde{X}}(\alpha)\rvert \geq
e_{\tilde{X}}(1-\alpha)\,  \lvert e_{\tilde{Y}}(\alpha)\rvert,
\quad \forall \alpha\in(0,1/2).
\end{align}
Since, by assumption, $e_{\tilde{Y}}(1-\alpha)\geq e_{\tilde{X}}(1-\alpha)$ and
$\lvert e_{\tilde{X}}(\alpha)\rvert
\geq \lvert e_{\tilde{Y}}(\alpha)\rvert$ for $\alpha\in(0,1/2)$,
inequality (\ref{exp-ord-equiv}) holds.
Since $s_2$ is invariant under location-scale transforms, $s_{2,X}(\alpha)\leq s_{2,Y}(\alpha)$ holds as well for all $\alpha\in(0,1/2)$.
\end{proof}

\begin{remark} \label{order-s2}
If we write $X \leq_{s_2} Y$ if $s_{2,X}(\alpha)\leq s_{2,Y}(\alpha)$ for all $\alpha\in(0,1/2)$, the proof of Thm. \ref{s2-cons} shows that $\leq_{s_2}$ is a weaker order of skewness than $\leq_{s}$.
The question if $\leq_{s_2}$ implies $\leq_{sf}$ or vice versa is open.
Another expectile based order, stronger than $\leq_{s_2}$, could be defined analogously to the quantile-based skewness order $\leq_c$, i.e. by
\begin{align} \label{expskew}
\frac{e_X(w)-2e_X(v)+e_X(u)}{ e_X(w)-e_X(u)} &\leq
\frac{e_Y(w)-2e_Y(v)+e_Y(u)}{ e_Y(w)-e_Y(u)}
\end{align}
for all $0<u<v<w<1$; this order is equivalent to the convexity of $\breve{F}_Y^{-1} \circ \breve{F}_X$, where $\breve{F}_X$ is defined in (\ref{exp-cdf}) (cp. \cite{ek-exl} for the quantile case). However, the above results are not strong enough to show that this order is weaker than $\leq_c$.
\end{remark}

For the suitability of $s_2$ as a skewness measure, including its empirical counterpart, we refer to \cite{ek-exl} and \cite{ek}. To gain an impression, consider a Bernoulli distribution with success probability $p$. Clearly, for $p=1/2$, any skewness measure should be zero. For decreasing $p$, the distribution becomes more and more skewed to the right (note that the center decreases to 0), and a skewness measure should converge to its maximal possible value. Similarly, it should converge to its minimal possible value for $p\to 1$. For the expectile skewness, we obtain $s_2(\alpha)=1-2p$, independent of $\alpha$. The moment skewness is $(1-2p)/(p(1-p))$; quantile-based skewness measures are not uniquely defined for discrete distributions.

\section{Expectile dispersive order and related measures} \label{sec-exp-disp}

Additionally to the stochastic orders in Section \ref{sec-expectile}, we need the following dispersion or variability orders.

\begin{definition} \label{disp-order}
\begin{enumerate}
\item[(i)]
$F_X$ precedes $F_Y$ in the dispersive order
(written $F_X \leq_{disp} F_Y$) if
\begin{align} \label{disp-ord}
F_X^{-1}(v)-F_X^{-1}(u) \leq F_Y^{-1}(v)-F_Y^{-1}(u)  \quad \forall \ 0<u<v<1.
\end{align}
\item[(ii)]
$F_X$ precedes $F_Y$ in the weak dispersive order
(written $F_X \leq_{w-disp} F_Y$) if (\ref{disp-ord}) is fulfilled for all
$0<u \leq 1/2 \leq v<1$.
\item[(iii)]
$F_X$ precedes $F_Y$ in the expectile dispersive order (written $F_X \leq_{e-disp} F_Y$) if
\begin{align} \label{edisp-ord}
e_X(v)-e_X(u) \leq e_Y(v)-e_Y(u)  \quad \forall \ 0<u<v<1.
\end{align}
\item[(iv)]
$F_X$ precedes $F_Y$ in the weak expectile dispersive order (written $F_X \leq_{we-disp} F_Y$) if (\ref{edisp-ord}) is fulfilled for all $0<u \leq 1/2 \leq v<1$.
\item[(v)]
$F_X$ precedes $F_Y$  in the dilation order (written $F_X \leq_{dil} F_Y$) if
\begin{align} \label{dil-ord}
X-EX \leq_{cx} Y-EY.
\end{align}
\end{enumerate}
\end{definition}

\begin{remark}
The first ordering in Def. \ref{disp-order} is well-known (see, e.g., \cite{bickel4}, \cite{oja}, \cite{shaked82}, \cite{shaked}). The defining condition of the weak dispersive order, which is obviously weaker than the dispersive order, can be equivalently written as
\begin{align} \label{w-disp-order}
F_Y^{-1}(u)-F_X^{-1}(u) &\leq (\geq) \, F_Y^{-1}(1/2)-F_X^{-1}(1/2)
\quad \text{for } u \leq (\geq ) 1/2.
\end{align}
This ordering was used in \cite{bickel3} for symmetric distribution; for arbitrary distributions, it was introduced in \cite{macgillivray}, Def.\ 2.6, and denoted by $\leq_1^m$. In introducing strong and weak expectile dispersive orders, we mimic these definitions. The following results show that the weak ordering defined in such a way has appealing theoretical properties; further, it corresponds to the expectile-based scale measures treated in Sec. \ref{sec-exp-scale}. The expectile equivalent of the (strong) dispersive order from Def.\ \ref{disp-order}(i)
is shortly discussed in Example \ref{example-se-disp} and Remark \ref{lomax-remark}.

We remark that \cite{bellini2021} have introduced the following expectile based variability order which is even weaker than the weak expectile dispersive order:
\[
X \le_{\Delta-ex} Y,  \quad \text{if }
e_X(1-\alpha)-e_X(\alpha) \leq e_Y(1-\alpha)-e_Y(\alpha)
\; \forall \ \alpha \in (0,1/2).
\]
This order can be seen as a dispersion counterpart to the skewness order $\leq_{s_2}$ in Remark \ref{order-s2}.
The dilation order in Def. \ref{disp-order}(v) is less well-known compared to the dispersive order, see \cite{belzunce-et-al} or \cite{fagiuoli}. However, many of its properties follow from properties of the convex order.
\end{remark}

Our main result in this section shows that the weak expectile dispersive order and the dilation order coincide. The proof uses the idea in the proof of Thm.\ 14 in \cite{arab}.

\begin{theorem} \label{theo-dil}
Let $X,Y\in L^1$ with strictly increasing cdf's $F_X$ and $F_Y$ which are  continuous on their supports. Then, $F_X \leq_{we-disp} F_Y$ if, and only if, $F_X \leq_{dil} F_Y$.
\end{theorem}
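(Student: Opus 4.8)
The plan is to exploit the identity $\breve{F}_X = e_X^{-1}$ together with the explicit formula (\ref{exp-cdf}), reducing both orders to a single pointwise comparison of stop-loss transforms. First I would normalize. Since differences of expectiles are translation invariant and the dilation order compares the centered variables $X-EX$ and $Y-EY$, both $\leq_{we-disp}$ and $\leq_{dil}$ are unaffected when $X,Y$ are replaced by $X-EX,Y-EY$. Hence I may assume $EX=EY=0$, so that $e_X(1/2)=e_Y(1/2)=0$ by the first order condition (\ref{exp_def_2}) at $\alpha=1/2$, and $F_X\leq_{dil}F_Y$ becomes the convex order $X\leq_{cx}Y$.

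Next I would rewrite the weak order. Setting $v=1/2$ and then $u=1/2$ in (\ref{edisp-ord}) and using $e_X(1/2)=e_Y(1/2)=0$, the condition $F_X\leq_{we-disp}F_Y$ is equivalent to the pair $e_Y(u)\leq e_X(u)$ for $u\in(0,1/2]$ and $e_X(v)\leq e_Y(v)$ for $v\in[1/2,1)$; conversely, adding these two inequalities recovers (\ref{edisp-ord}) for all $0<u\leq 1/2\leq v<1$, so this reduction is genuinely an equivalence. Passing to the inverse functions $\breve{F}_X=e_X^{-1}$ and $\breve{F}_Y=e_Y^{-1}$ (both strictly increasing, with value $1/2$ at $0$), a short monotonicity argument translates these into $\breve{F}_X(t)\leq\breve{F}_Y(t)$ for $t\leq 0$ and $\breve{F}_X(t)\geq\breve{F}_Y(t)$ for $t\geq 0$.

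The crux is the monotonicity trick borrowed from the proof of Thm.\ 14 in \cite{arab}. With $\mu=0$, formula (\ref{exp-cdf}) reads $\breve{F}_X(t)=(\pi_X(t)+t)/(2\pi_X(t)+t)$, where the denominator equals $E\lvert X-t\rvert>0$. Viewing the right-hand side as $h(\pi_X(t))$ with $h(p)=(p+t)/(2p+t)$, a direct computation gives $h'(p)=-t/(2p+t)^2$, so $h$ is strictly decreasing in $p$ when $t>0$ and strictly increasing when $t<0$. Consequently, at every fixed $t\neq 0$, the inequality $\pi_X(t)\leq\pi_Y(t)$ holds if and only if $\breve{F}_X(t)\geq\breve{F}_Y(t)$ (for $t>0$), respectively $\breve{F}_X(t)\leq\breve{F}_Y(t)$ (for $t<0$). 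These are exactly the inequalities isolated in the previous step, so $F_X\leq_{we-disp}F_Y$ is equivalent to $\pi_X(t)\leq\pi_Y(t)$ for all $t$ (the value $t=0$ following from continuity of the stop-loss transforms). Finally, since $EX=EY$, pointwise stop-loss domination $\pi_X\leq\pi_Y$ is precisely the convex order $X\leq_{cx}Y$, i.e.\ $F_X\leq_{dil}F_Y$, which closes the equivalence.

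I anticipate the main obstacle to be the careful sign bookkeeping in the third step: one must track how the direction of the inequality between $\breve{F}_X$ and $\breve{F}_Y$ flips with the sign of $t$ and match it correctly to the monotonicity of $h$, while checking that the denominator $2\pi_X(t)+t=E\lvert X-t\rvert$ stays strictly positive so that $h$ is well defined and genuinely strictly monotone for $t\neq 0$. The remaining identifications — translating $\leq_{we-disp}$ into the two anchored inequalities and recognizing $\pi_X\leq\pi_Y$ with equal means as the convex order — are routine.
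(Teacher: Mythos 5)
Your proposal is correct and follows essentially the same route as the paper: both reduce the two orders to the sign-split comparison of $\breve{F}_X$ and $\breve{F}_Y$ on either side of the mean via the monotonicity of the map linking the stop-loss transform to the expectile cdf in (\ref{exp-cdf}) (the paper applies the fixed function $h(x)=(x+1)/(2x+1)$ to $\pi(t)/t$, while you differentiate the $t$-dependent map $p\mapsto(p+t)/(2p+t)$ directly — the same trick from Thm.~14 of \cite{arab}). The only cosmetic difference is that you anchor the weak expectile dispersive order at $e(1/2)=\mu$ explicitly rather than citing the representation (\ref{w-disp-order}); the handling of $t=0$ by continuity matches the paper as well.
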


\begin{proof}
Define $\tilde{X}=X-EX, \tilde{Y}=Y-EY$. Then, $X \leq_{dil} Y$ is equivalent to
$\tilde{X} \leq_{cx} \tilde{Y}$, and therefore to
$\pi_{\tilde{X}}(t)\leq \pi_{\tilde{Y}}(t) \, \forall t$.
By continuity of $\pi_{\tilde{X}}$ and $\pi_{\tilde{Y}}$, this is equivalent to
$\pi_{\tilde{X}}(t)/\lvert t\rvert \leq \pi_{\tilde{Y}}(t)/\lvert t\rvert  \, \forall t\neq 0$, and hence to
\begin{align} \label{dil-equiv1}
\frac{\pi_{\tilde{X}}(t)}{t} \geq \frac{\pi_{\tilde{Y}}(t)}{t} \; \forall t<0 \quad \text{and} \quad
\frac{\pi_{\tilde{X}}(t)}{t} \leq \frac{\pi_{\tilde{Y}}(t)}{t} \; \forall t>0.
\end{align}
Note that, using the properties of the stop-loss transform \citep[Thm.\ 1.5.10]{mueller-stoyan}, $\pi_{\tilde{X}}(t)/t \leq -1$ for $t<0$, and $\pi_{\tilde{X}}(t)/t \geq 0$ for $t>0$.
Now, applying to both sides of the inequalities the transformation
$h(x)=(x+1)/(2x+1), x\neq -1/2$, which is decreasing for $x<-1/2$ as well as for $x>-1/2$, shows that (\ref{dil-equiv1}) is equivalent to
\begin{align} \label{dil-equiv2}
\breve{F}_{\tilde{X}}(t) \leq \breve{F}_{\tilde{Y}}(t) \; \forall t<0 \quad \text{and} \quad
\breve{F}_{\tilde{X}}(t) \geq \breve{F}_{\tilde{Y}}(t) \; \forall t>0,
\end{align}
where $\breve{F}_X$ is the expectile cdf defined in (\ref{exp-cdf}).
In turn, (\ref{dil-equiv2}) is equivalent to
\begin{align*}
e_{\tilde{X}}(p) \geq e_{\tilde{Y}}(p) \; \forall p<1/2 \quad \text{and} \quad
e_{\tilde{X}}(p) \leq e_{\tilde{Y}}(p) \; \forall p>1/2.
\end{align*}
This means that $e_X(p)-EX \geq e_Y(p)-EY \, \forall p<0 \quad \text{and} \quad
e_X(p)-EX \leq e_Y(p)-EY \, \forall p>0$, which is equivalent to
$X \leq_{we-disp} Y$ (cp. the representation of the weak dispersive order in (\ref{w-disp-order})).
\end{proof}

\begin{remark}
\cite{bellini}, Thm.\ 3(b) already proved that $X\leq_{cx} Y$ implies
$e_X(\alpha) \geq e_Y(\alpha)$ for each $\alpha\leq 1/2$ and
$e_X(\alpha) \leq e_Y(\alpha)$ for each $\alpha\geq 1/2$.
From this, the implication $F_X \leq_{dil} F_Y \,\Rightarrow \, F_X \leq_{we-disp} F_Y$ follows, see also \cite{bellini2018b}. However, Thm. \ref{theo-dil} also yields the reverse direction. Moreover, its proof is rather elementary,
whereas \cite{bellini} employed techniques from monotone comparative statics.
\end{remark}

Since, for random variables with finite mean, the dispersive order implies the dilation order (\cite{shaked}, Thm.\ 3.B.16), the next result is a direct consequence of Thm.\ \ref{theo-dil}.

\begin{corollary}
Let $F_X$ and $F_Y$ be strictly increasing and continuous on their supports with finite expectations. Then, $X\leq _{disp}Y$ implies $X\leq_{we-disp}Y$.
\end{corollary}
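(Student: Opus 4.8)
The plan is to chain two implications, so that the corollary follows by simple composition. First I would invoke the classical result that, for random variables with finite mean, the dispersive order implies the dilation order (the cited Shaked, Thm.\ 3.B.16); applied to the hypothesis $X \leq_{disp} Y$ this yields $X \leq_{dil} Y$. Second I would apply Theorem \ref{theo-dil}, which asserts the equivalence $F_X \leq_{dil} F_Y \Leftrightarrow F_X \leq_{we-disp} F_Y$ under the present regularity assumptions, to pass from $X \leq_{dil} Y$ to $X \leq_{we-disp} Y$. Composing the two gives exactly $X \leq_{disp} Y \Rightarrow X \leq_{we-disp} Y$.

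Before chaining, I would verify that the hypotheses of the two cited results line up with those of the corollary. The finite-expectation assumption is precisely what the dispersive-implies-dilation statement requires, and the assumption that $F_X, F_Y$ are strictly increasing and continuous on their supports is exactly the standing regularity needed for Theorem \ref{theo-dil}. Since the dilation order is defined via $\leq_{cx}$ of the centered variables $X-EX$ and $Y-EY$, and both lie in $L^1$, the convex-order comparison in the intermediate step is well defined. Thus both implications apply verbatim and their composition is immediate.

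The proof involves no genuine obstacle; the only point requiring care is this bookkeeping check that the dispersive-order hypothesis together with finiteness of the means places us squarely within the scope of the Shaked result, with no hidden additional moment condition being needed. Once that is confirmed, the corollary is a one-line consequence of the preceding theorem.
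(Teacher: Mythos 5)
Your proof is correct and coincides with the paper's own argument: the corollary is deduced by chaining the classical implication $X \leq_{disp} Y \Rightarrow X \leq_{dil} Y$ for finite means (Shaked, Thm.\ 3.B.16) with the equivalence of Theorem \ref{theo-dil}. Your additional hypothesis bookkeeping is sound but adds nothing beyond what the paper implicitly uses.
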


In general, the (strong) expectile dispersive order in Def. \ref{disp-order}(iii) is rather difficult to handle. However, the following example shows that this order does not imply the dispersive order (see also Remark \ref{lomax-remark}c)).

It is an open question if the reverse direction holds, i.e. if the dispersive order implies the expectile dispersive order.

\begin{example} \label{example-se-disp}
	Let $p_X, p_Y \in (0, 1)$, $p_X \neq p_Y$ and $0 < a_X < a_Y$. Furthermore, let $\tilde{X} \sim \mathrm{Bin}(1, p_X), \tilde{Y} \sim \mathrm{Bin}(1, p_Y)$ and $X = a_X \cdot \tilde{X}$ and $Y = a_Y \cdot \tilde{Y}$. It follows directly that $X$ and $Y$ are not comparable with respect to $\leq_{disp}$ since $\mathrm{range}(F_X) \subseteq \mathrm{range}(F_Y)$ is a necessary condition for $X \leq_{disp} Y$ \citep[Thm.\ 1.7.3]{mueller-stoyan}. Further, a simple calculation yields $\pi_X(t) = p_X (a_X - t)$ for $t \in [0, a_X]$. It follows that
	\begin{equation*}
		\breve{F}_X(t) = \frac{t(1 - p_X)}{p_X a_X + t (1 - 2p_X)} \quad \text{and} \quad e_X(\alpha) = \frac{\alpha p_X a_X}{(1-\alpha) + p_X(2\alpha - 1)}
	\end{equation*}
	for $t \in [0, a_X]$ and $\alpha \in (0, 1)$ with analogous results for $Y$. Overall,
	\begin{equation*}
		(e_Y \circ \breve{F}_X)(t) = \frac{p_Y (1- p_X) a_Y t}{p_X (1 - p_Y) a_X + t (p_Y - p_X)}
	\end{equation*}
	for $t \in [0, a_X]$. Since $X \leq_{disp} Y$ is equivalent to $(F_Y^{-1} \circ F_X)' \geq 1$ \citep[see, e.g.,][p.\ 157]{oja}, $X \leq_{e-disp} Y$ is equivalent to $(e_Y \circ \breve{F}_X)' \geq 1$. Because of $\lim_{p_X \to p_Y} (e_Y \circ \breve{F}_X)(t) = \frac{a_Y}{a_X} t$ for all $t \in [0, a_X]$, $X \leq_{e-disp} Y$ holds if the difference between $p_X$ and $p_Y$ is sufficiently small.\par
	Since the involved distributions are discrete, this example does not fit in the general setting of this work. However, the statement of this example remains valid if the distributions of both $X$ and $Y$ are sufficiently closely approximated by continuous distributions (e.g.\ by linear interpolation). Overall, it is proved that $X \leq_{e-disp} Y \not\Rightarrow X \leq_{disp} Y$ in general.
\end{example}

\section{Dispersion orders for the Lomax distribution} \label{sec-lomax}
To illustrate the various concepts, we consider the Lomax or Pareto type II distribution having density, distribution and quantile function
\begin{align*}
f(t) &= f(t;\alpha,\lambda) \ = \ \frac{\alpha}{\lambda} \left(1+\frac{t}{\lambda}\right)^{-(\alpha+1)}, \quad t\geq 0, \\
F(t) &= F(t;\alpha,\lambda) \ = \ 1 - \left(1+t/\lambda\right)^{-\alpha}, \quad t\geq 0, \\
q(p) &= q(p;\alpha,\lambda)
\ = \ \lambda \left( (1-p)^{-1/\alpha}-1 \right), \quad 0<p<1,
\end{align*}
where $\alpha$ and $\lambda$ are positive parameters. Accordingly, the stop-loss transform is given by
\begin{align*}
\pi(t) &= \frac{\lambda}{\alpha-1} \left(1+\frac{t}{\lambda}\right)^{1-\alpha}, \quad t\geq 0,
\end{align*}
and the expectile cdf $\breve{F}$ can explicitly computed by (\ref{exp-cdf}).
Further, if $X\sim F(\cdot;\alpha,\lambda)$,
\begin{align*}
 EX=\frac{\lambda}{\alpha-1} \; (\alpha>1), & \quad
 Var(X)= \frac{\lambda^2\alpha}{(\alpha-1)^2(\alpha-2)} \; (\alpha>2).
\end{align*}
In the following, assume $X\sim F(t;\alpha_1,\lambda_1)$ and $Y\sim F(t;\alpha_2,\lambda_2)$.

\subsection{Dispersive order}
If $X$ and $Y$ have densities $f_X$ and $f_Y$, then $X\le_{disp}Y$ if, and only if,
\begin{align*}
  f_Y\left( F^{-1}_Y(p) \right) & \leq f_X\left( F^{-1}_X(p) \right) \quad \forall p\in(0,1)
\end{align*}
\citep[(3.B.119)]{shaked}. Applied to the Lomax distribution, $X\le_{disp}Y$ iff
\begin{align} \label{cond-disp}
\frac{\alpha_2}{\lambda_2}(1-p)^{1/\alpha_2} &\leq
\frac{\alpha_1}{\lambda_1}(1-p)^{1/\alpha_1}, \quad 0<p<1.
\end{align}
For $p$ converging to 0, (\ref{cond-disp}) is fulfilled if
\begin{equation} \label{cond-disp1}
\frac{\lambda_1}{\alpha_1} \leq \frac{\lambda_2}{\alpha_2}.
\end{equation}
Looking at $p\to 1$ shows that
\begin{equation} \label{cond-disp2}
\alpha_1 \geq \alpha_2
\end{equation}
is a second necessary condition for $X\le_{disp}Y$.
However, (\ref{cond-disp1}) and (\ref{cond-disp2}) are also sufficient:
Taking the reciprocal of (\ref{cond-disp}) shows that the inequality is equivalent to
\begin{equation} \label{cond-disp3}
\frac{1}{\alpha_1} q(p;\alpha_1,\lambda_1) + \frac{\lambda_1}{\alpha_1}
\; \leq \;
\frac{1}{\alpha_2} q(p;\alpha_2,\lambda_2)+\frac{\lambda_2}{\alpha_2}.
\end{equation}
Now, (\ref{cond-disp1}) and (\ref{cond-disp2}) are necessary and sufficient for $X\leq_{st} Y$ \citep{bell-et-al}. Hence,
$q(p;\alpha_1,\lambda_1)\leq q(p;\alpha_2,\lambda_2)$ for all $p$, and (\ref{cond-disp3}) holds under (\ref{cond-disp1}) and (\ref{cond-disp2}).
Overall, we have
\begin{align*}
  X\leq_{st} Y  \text{ as well as } X\leq_{disp} Y  & \;
  \text{ iff \; (\ref{cond-disp1})  and  (\ref{cond-disp2}) hold}.
\end{align*}

\subsection{Weak expectile dispersive order}
Here, we have to assume $\alpha_1,\alpha_2>1$ for the expected values to exist. Define $H(p)=\frac{1}{1-p}\int_p^1 F^{-1}(u)du$. \cite{fagiuoli} showed that $X\leq_{dil}Y$ if, and only if,
\begin{align} \label{cond-dil1}
   H_X(p)-H_X(0)& \leq H_Y(p)-H_Y(0), \quad  0<p<1.
\end{align}
For the Lomax distribution, $H(p)=(\alpha q(p)+\lambda)/(\alpha-1)$ and
$H(0)=\lambda/(\alpha-1)$. Hence, (\ref{cond-dil1}) holds iff
\begin{align} \label{cond-dil2}
 \frac{\alpha_1 \, q(p;\alpha_1,\lambda_1)}{\alpha_1-1} &\leq
 \frac{\alpha_2 \, q(p;\alpha_2,\lambda_2)}{\alpha_2-1}, \quad  0<p<1.
\end{align}
A discussion of the behavior of $q(p)$ for $p\to 1$ shows that (\ref{cond-disp2}) is necessary for (\ref{cond-dil2}).
Further, a second order Taylor expansion around $p=0$ yields
$q(p)=\lambda p/\alpha+O(p^2)$. Therefore, (\ref{cond-dil2}) can only be satisfied if
\begin{equation} \label{cond-dil}
\frac{\lambda_1}{\alpha_1-1} \leq \frac{\lambda_2}{\alpha_2-1}.
\end{equation}
Thus, (\ref{cond-disp2}) and (\ref{cond-dil}) are necessary for $X\leq_{dil}Y$.
We now show that they are also sufficient.
Since $cq_X(p)=q_{cX}(p)$, and $\lambda$ is a scale parameter, (\ref{cond-dil2}) is equivalent to
\begin{align*}
 q\left(p;\alpha_1,\frac{\alpha_1\lambda_1}{\alpha_1-1}\right)  &\leq
 q\left(p;\alpha_2,\frac{\alpha_2\lambda_2}{\alpha_2-1}\right), \quad  0<p<1.
\end{align*}
This, in turn, is equivalent to
\begin{align} \label{cond-dil3}
 \bar{F}(p;\alpha_1,\tilde{\lambda}_1)  &\leq
 \bar{F}(p;\alpha_2,\tilde{\lambda}_2), \quad  0<p<1,
\end{align}
where $\tilde{\lambda}_i=\alpha_i\lambda_i/(\alpha_i-1), i=1,2.$ The above results about the usual stochastic order yield that (\ref{cond-dil3}) is satisfied iff
$\alpha_1\geq \alpha_2$ and
$\tilde{\lambda}_1/\alpha_1 \leq \tilde{\lambda}_2/\alpha_2$, which coincide with
(\ref{cond-disp2}) and (\ref{cond-dil}).

\cite{bell-et-al} have shown that these two conditions are also necessary and sufficient for the so-called increasing convex order ($\leq_{icx}$). Hence, we have the following result:
\begin{align*}
  X\leq_{we-disp} Y  \text{ as well as } X\leq_{icx} Y  & \;
  \text{ iff \; (\ref{cond-disp2})  and  (\ref{cond-dil}) hold}.
\end{align*}

\begin{remark} \label{lomax-remark}
\begin{enumerate}
\item[a)]
Assume $\alpha_1\geq \alpha_2>2$  and that (\ref{cond-dil}) holds. Then,
$\alpha_1/(\alpha_1-2)\leq \alpha_2/(\alpha_2-2)$, and $Var(X)\leq Var(Y)$ follows. Generally, the variance preserves the weak expectile dispersive order and therefore also the dilation order (see also the following section).
\item[b)]
If $\alpha_1\geq \alpha_2$ and $\lambda_1/(\alpha_1-1)=\lambda_2/(\alpha_2-1)$, i.e. $EX=EY$, then $X\leq_{icx} Y$ corresponds to $X\leq_{cx} Y$. According to the last section, $X\leq_{we-disp} Y$ holds as well in this case.

On the other hand, if $X$ and $Y$ are random variables from different Lomax
distributions, but with equal means, they can never be ordered in expectile order \citep{bell-et-al}. Since the dispersive order implies the stochastic order for distributions with the same finite left endpoint of their supports \citep[Thm.\ 3.B.13]{shaked}, the expectile dispersive order then also implies the expectile (location) order (by applying the cited result to the expectile cdf's $\breve{F}_X$ and $\breve{F}_Y$). Hence, $X$ and $Y$ can also not be ordered with respect to the expectile dispersive ordering. This example shows that $\leq_{we-disp}$ is strictly weaker than $\leq_{e-disp}$.
\item[c)]
Let $\alpha_1 = 3, \lambda_1 = \sqrt{3}, \alpha_2 = 2, \lambda_2 = 1$. Since $\alpha_1 >\alpha_2$ and $EX=\sqrt{3}/2 < EY=1$, one has $X \leq_{we-disp} Y$. On the other hand, (\ref{cond-disp1}) is not satisfied. Hence, $X \le_{disp} Y$ does not hold.
In this example, $X$ also precedes $Y$ in the expectile location order \citep[Thm.\ 23]{bell-et-al}. The left panel of Figure \ref{fig:lomax} shows the interexpectile ranges of $X$ and $Y$; clearly, $e_X(1-p)-e_X(p)\leq e_Y(1-p)-e_Y(p)$ for $0<p<1$.
The right panel shows a plot of $e_Y(\breve{F}_X(x))-x$, which is increasing in $x$. This indicates that also $X \leq_{e-disp} Y$ holds \citep[(3.B.10)]{shaked}. Hence, similarly to Example \ref{example-se-disp}, this shows that $X \leq_{e-disp} Y$ does not imply $X \le_{disp} Y$ in general.

\begin{figure}
\centering{\includegraphics[width=0.95\textwidth]{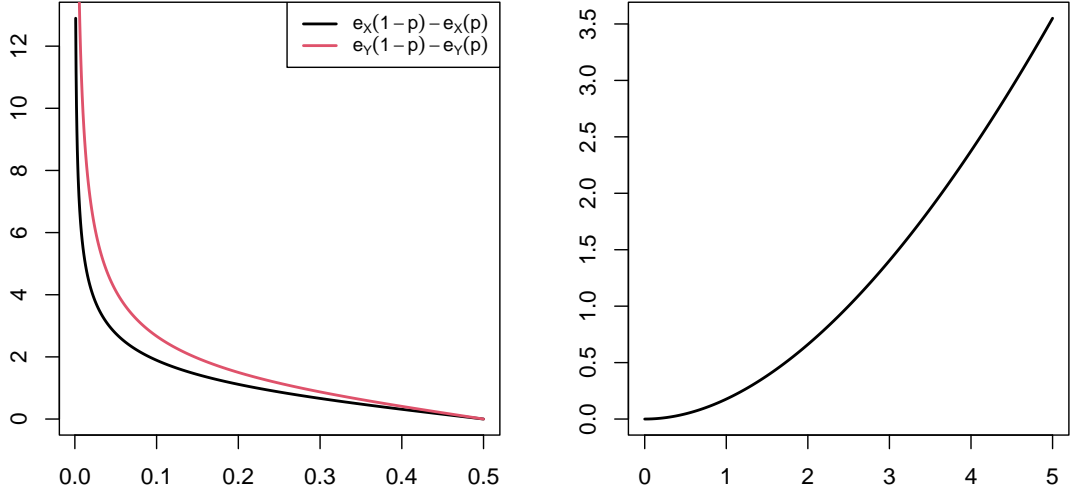}
\caption{\label{fig:lomax} Left panel: Interexpectile ranges of $X$ (in black) and $Y$ (in red) under Lomax distributions. Left panel: Plot of $e_Y(\breve{F}_X(x))-x$.}
}
\end{figure}

\end{enumerate}
\end{remark}

\section{Interexpectile ranges and their empirical counterparts} \label{sec-exp-scale}

It is clear that any functional of the form $E[\varphi(X-EX)]$, where $\varphi$ is a convex function, preserves the dilation order. Then, by Thm. \ref{theo-dil}, this also holds for the weak expectile dispersive order. Examples are the standard deviation
$\sigma(F)=\{E(X-EX)^2\}^{1/2}$ and the mean absolute deviation around the mean $E\lvert X-EX\rvert$.
In this section, however, we want to have a closer look on the interexpectile range (IER) $\mathcal{E}_{\alpha}=e_X(1-\alpha)-e_X(\alpha), \alpha \in (0,1/2)$.
These scale measures obviously preserve the $\leq_{we-disp}$ ordering, and, hence, also the dilation order; they have already appeared as a scaling factor in the definition of $s_2$.
Moreover, (implicit) interexpectile differences have been used to extract information about the risk-neutral distribution of a financial index by \cite{bellini2018b,bellini2020}.

Using the properties of expectiles (see, e.g., \citealt{bkmr}), $\mathcal{E}_{\alpha}$ obviously satisfies property D1. in the introduction.
Hence, it is a  measure of variability with respect to the dispersive order, the most fundamental variability ordering, but also with respect to the dilation order. The latter is an order with respect to the mean, whereas the first order is location-free.
Further properties of the theoretical IER can be found in \cite{bellini2018b}, Prop.\ 3.1.

The results in Sec.\ \ref{sec-exp-skew} show that the MAD arises quite naturally when dealing with expectile-based skewness orders. Our next result bounds $\mathcal{E}_{\alpha}$ in terms of the MAD.

\begin{theorem}
For $\alpha\in (0,1/2)$,
\begin{align*}
\frac{1-2\alpha}{1-\alpha}E\lvert X-\mu\rvert &< \mathcal{E}_{\alpha}
\, < \frac{1-2\alpha}{\alpha} E\lvert X-\mu\rvert.
\end{align*}
In particular, $\frac23 E\lvert X-\mu\rvert < \mathcal{E}_{1/4}
< 2 E\lvert X-\mu\rvert$.
\end{theorem}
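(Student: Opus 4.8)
The plan is to bound the two halves $\mu - e_X(\alpha)$ and $e_X(1-\alpha) - \mu$ of the interexpectile range separately, combining the first-order condition (\ref{exp_def_2}) with the elementary analytic properties of the stop-loss transform $\pi_X$. Write $e = e_X(\alpha)$, $e' = e_X(1-\alpha)$ and $d = E\lvert X-\mu\rvert$, and note $e < \mu < e'$ because $e_X(1/2)=\mu$ and expectiles are increasing in their level. The anchoring identity is $\pi_X(\mu) = E(X-\mu)_+ = \tfrac12 E\lvert X-\mu\rvert = d/2$.

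First I would recast the first-order condition in terms of $\pi_X$. Using $E(X-t)_- = \pi_X(t)-\mu+t$, the equation $\alpha\,E(X-e)_+ = (1-\alpha)\,E(X-e)_-$ rearranges to $\mu - e = \tfrac{1-2\alpha}{1-\alpha}\,\pi_X(e)$, and the analogous condition at level $1-\alpha$ gives $e' - \mu = \tfrac{1-2\alpha}{\alpha}\,\pi_X(e')$. Hence $\mathcal{E}_\alpha = (\mu-e)+(e'-\mu)$ is fully controlled once $\pi_X(e)$ and $\pi_X(e')$ are bounded. I would use two facts about $\pi_X$: it is strictly decreasing on the interior of the support, so $\pi_X(e) > \pi_X(\mu) > \pi_X(e')$; and it is $1$-Lipschitz with $\pi_X'(t) = -\bar F_X(t)\in(-1,0)$, so that $0 < \pi_X(e)-\pi_X(\mu) = \int_e^\mu \bar F_X(t)\,dt < \mu-e$ and, symmetrically, $0 < \pi_X(\mu)-\pi_X(e') < e'-\mu$.

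The key move is to feed the Lipschitz bound back into the two identities, which yields closed self-referential inequalities. For the left half, $\pi_X(e) < \tfrac{d}{2} + (\mu-e) = \tfrac{d}{2} + \tfrac{1-2\alpha}{1-\alpha}\pi_X(e)$ solves to $\pi_X(e) < \tfrac{(1-\alpha)d}{2\alpha}$; together with $\pi_X(e) > \tfrac{d}{2}$ this gives $\tfrac{(1-2\alpha)d}{2(1-\alpha)} < \mu-e < \tfrac{(1-2\alpha)d}{2\alpha}$. An identical computation for the right half, using $\pi_X(e') > \tfrac{d}{2}-(e'-\mu)$ and $\pi_X(e')<\tfrac{d}{2}$, produces $\tfrac{(1-2\alpha)d}{2(1-\alpha)} < e'-\mu < \tfrac{(1-2\alpha)d}{2\alpha}$. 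Adding the two chains gives exactly $\tfrac{1-2\alpha}{1-\alpha}\,d < \mathcal{E}_\alpha < \tfrac{1-2\alpha}{\alpha}\,d$, and substituting $\alpha=1/4$ yields the stated numerical bounds $\tfrac23 E\lvert X-\mu\rvert < \mathcal{E}_{1/4} < 2E\lvert X-\mu\rvert$.

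The step I expect to be the real crux, rather than routine algebra, is exactly this self-referential closure: the naive estimates $\pi_X(e)>d/2$ and $\pi_X(e')>0$ are by themselves too weak and lose a factor of two in the lower bound, so one genuinely needs to combine monotonicity with the Lipschitz property through the first-order identities. The only other point requiring care is the strictness of all inequalities, which matches the ``$<$'' in the statement: it follows from $0<\bar F_X<1$ strictly on the open support, which is guaranteed by the paper's standing assumptions (atomless distributions with interval support) and by $\alpha\in(0,1/2)$ keeping both $e$ and $e'$ strictly interior.
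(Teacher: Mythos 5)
Your proof is correct and rests on the same ingredients as the paper's: the first-order condition rewritten via the stop-loss transform, anchored at $\pi_X(\mu)=\tfrac12 E\lvert X-\mu\rvert$, together with the strict monotonicity of $t\mapsto E(X-t)_+$ and of $t\mapsto E(X-t)_- = \pi_X(t)-\mu+t$ (your ``Lipschitz property'' of $\pi_X$ is exactly the latter). The only cosmetic difference is that you close the bounds separately for $\mu-e_X(\alpha)$ and $e_X(1-\alpha)-\mu$ via the self-referential inequalities, whereas the paper sums the four comparison inequalities directly; the content is identical.
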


\begin{proof}
For any $\tau\in (0,1)\setminus \{1/2\}$, the first order condition (\ref{exp_def_2}) can be rewritten  as
\begin{align}
E(X - e_X(\tau))_+ &= \frac{1-\tau}{1-2\tau}(\mu-e_X(\tau)),		\label{eqn:stopLossRewr}\\
E(X - e_X(\tau))_- &= \frac{\tau}{1-2\tau}(\mu-e_X(\tau)).		\label{eqn:negStopLossRewr}
\end{align}
Since $E(X - t)_+$ is strictly decreasing and $E(X - t)_-$ is strictly increasing,
and since $e_X(\alpha)<\mu <e_X(1-\alpha)$, we further obtain
\begin{align}
E(X-e_X(1-\alpha))_+ &< E(X-\mu)_+ < E(X-e_X(\alpha))_+, \label{eqn:stopLossMon2}\\
E(X-e_X(\alpha))_- &< E(X-\mu)_- < E(X - e_X(1 - \alpha))_-. \label{eqn:negStopLossMon2}
\end{align}
Adding the terms in (\ref{eqn:stopLossMon2}) and (\ref{eqn:negStopLossMon2}), and using equations (\ref{eqn:stopLossRewr}) and (\ref{eqn:negStopLossRewr}) then yields
\begin{align*}
\frac{\alpha}{1-2\alpha} \left(e_X(1-\alpha)-e_X(\alpha)\right)
&< E\lvert X-\mu\rvert
\, < \frac{1-\alpha}{1-2\alpha} \left(e_X(1-\alpha)-e_X(\alpha)\right),
\end{align*}
or
\begin{align*}
\frac{1-2\alpha}{1-\alpha}E\lvert X-\mu\rvert
&< e_X(1-\alpha)-e_X(\alpha)
\, < \frac{1-2\alpha}{\alpha} E\lvert X-\mu\rvert.
\end{align*}
\end{proof}

For any $\alpha\in(0,1/2)$, we define the population counterparts of $\mathcal{E}_{\alpha}$ by $\widehat{\mathcal{E}}_{\alpha,n} =\hat{e}_n(1-\alpha)-\hat{e}_n(\alpha)$.
For empirical expectiles, a multivariate central limit theorem as well as strong consistency holds \citep[see, e.g.,][]{holzmann}; from these results, a central limit theorem and strong consistency of $\widehat{\mathcal{E}}_{\alpha,n}$ can be derived. Using the notations
$\eta(\tau_1,\tau_2)= E[I_{\tau_1}(e_X(\tau_1),X) I_{\tau_2}(e_X(\tau_2),X)]$ for $\tau_1,\tau_2 \in (0,1)$
and $a_\alpha=\alpha+(1-2\alpha)F(e_X(\alpha))$,
the following holds true.

\begin{theorem}
Let $F_X$ be a cdf with $E\lvert X\rvert <\infty$, and $\alpha \in (0, 1/2)$.
\begin{enumerate}
\item[a)]
If $E X^2<\infty$ and $F_X$ does not have a point mass at $e_X(\alpha)$ or $e_X(1-\alpha)$, then
\begin{equation*}
\sqrt{n} \left( \widehat{\mathcal{E}}_{\alpha,n}-\mathcal{E}_\alpha \right) \VK \mathcal{N}(0, \sigma^2_\mathcal{E}(\alpha)),
\end{equation*}
where
\begin{align*}
\sigma^2_\mathcal{E}(\alpha) &=\,
\frac{\eta(\alpha,\alpha)}{a_\alpha^2}
- \frac{2\eta(\alpha,1-\alpha)}{a_\alpha\cdot a_{1-\alpha}}
+ \frac{\eta(1-\alpha,1-\alpha)}{a_{1-\alpha}^2}.
\end{align*}
	
\item[b)]
$\widehat{\mathcal{E}}_{\alpha,n}$ is a strongly consistent estimator of $\mathcal{E}_\alpha$, i.e.
\begin{equation*}
\widehat{\mathcal{E}}_{\alpha,n} \FSK \mathcal{E}_\alpha.
\end{equation*}
	\end{enumerate}
\end{theorem}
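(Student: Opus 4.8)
The plan is to view the pair of empirical expectiles $\bigl(\hat e_n(\alpha),\hat e_n(1-\alpha)\bigr)$ as a single $Z$-estimator: each component solves $\frac1n\sum_i I_\tau(t,X_i)=0$, so I would import the multivariate central limit theorem and the strong consistency for empirical expectiles cited just before the statement \citep{holzmann}. Both $\widehat{\mathcal E}_{\alpha,n}$ and $\mathcal E_\alpha$ are the image of the expectile pair under the linear map $(u,v)\mapsto v-u$; hence part a) follows from the joint normality of the pair by a delta-method argument with this linear map, and part b) follows from componentwise consistency together with continuity of subtraction (continuous mapping theorem).

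For part a), I would first record the joint limit
\[
\sqrt n
\begin{pmatrix}\hat e_n(\alpha)-e_X(\alpha)\\ \hat e_n(1-\alpha)-e_X(1-\alpha)\end{pmatrix}
\VK \mathcal N(0,\Sigma),\qquad \Sigma=A^{-1}VA^{-1},
\]
where $A$ is the negative Jacobian of the expected estimating functions and $V$ their covariance. Because the estimating function for $\hat e_n(\tau)$ is $t\mapsto I_\tau(t,X)$ and does not involve the other component, $A$ is diagonal. Differentiating, using $\partial_t E(X-t)_+=-\bar F(t)$ and $\partial_t E(X-t)_-=F(t)$, gives
\begin{align*}
\frac{\partial}{\partial t}\,E\,I_\tau(t,X)=-\bigl[\tau+(1-2\tau)F(t)\bigr],
\end{align*}
which at $t=e_X(\tau)$ equals $-a_\tau$, so $A=\mathrm{diag}(a_\alpha,a_{1-\alpha})$. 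Since $E\,I_\tau(e_X(\tau),X)=0$ by definition of the expectile, $V$ is the second-moment matrix with entries $\eta(\alpha,\alpha)$, $\eta(\alpha,1-\alpha)$, $\eta(1-\alpha,1-\alpha)$. Then $\Sigma=A^{-1}VA^{-1}$ has diagonal entries $\eta(\tau,\tau)/a_\tau^2$ and off-diagonal $\eta(\alpha,1-\alpha)/(a_\alpha a_{1-\alpha})$, and the variance of the limit of $\sqrt n(\widehat{\mathcal E}_{\alpha,n}-\mathcal E_\alpha)$ is the quadratic form $c^\top\Sigma c$ with $c=(-1,1)^\top$, which is precisely $\sigma^2_{\mathcal E}(\alpha)$. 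Part b) is then immediate: under $E|X|<\infty$ each empirical expectile satisfies $\hat e_n(\tau)\FSK e_X(\tau)$, and $\widehat{\mathcal E}_{\alpha,n}=\hat e_n(1-\alpha)-\hat e_n(\alpha)\FSK \mathcal E_\alpha$ by continuity of the difference.

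The only nonroutine part is verifying that the hypotheses match the cited limit theorem and that the generic sandwich variance reduces to the stated $\eta$/$a$ form. The assumption $EX^2<\infty$ guarantees that $I_\tau(e_X(\tau),X)$ is square integrable, so the entries $\eta(\cdot,\cdot)$ of $V$ are finite; the assumption that $F_X$ has no point mass at $e_X(\alpha)$ or $e_X(1-\alpha)$ ensures $F$ is continuous there, so $t\mapsto E\,I_\tau(t,X)$ is differentiable at the expectile with derivative $-a_\tau\neq 0$, making $A$ invertible and the sandwich representation valid. Once these two regularity points are secured, the remaining derivative computation above and the linear delta step are entirely elementary.
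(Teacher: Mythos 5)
Your proposal is correct and follows exactly the route the paper intends: the paper gives no written proof but states that the result is derived from the multivariate CLT and strong consistency for empirical expectiles in Holzmann and Klar (2016), and your sandwich computation with $A=\mathrm{diag}(a_\alpha,a_{1-\alpha})$, $V=(\eta(\tau_i,\tau_j))$ and the linear map $(u,v)\mapsto v-u$ reproduces $\sigma^2_{\mathcal{E}}(\alpha)$ precisely. The regularity checks (square integrability from $EX^2<\infty$, differentiability of $t\mapsto E\,I_\tau(t,X)$ from the no-point-mass assumption, $a_\tau>0$) are exactly what is needed to invoke the cited theorem.
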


A natural competitor to $\mathcal{E}_\alpha$ is the interquantile range (IQR) $\mathcal{Q}_{\alpha} = q_X(1-\alpha)-q_X(\alpha), \alpha \in (0,1/2)$, where $q_X(\alpha)$ denotes the $\alpha$-quantile of the distribution of $X$.
By definition, the interquantile range preserves the dispersive order. However, it is not consistent with the dilation order, which may be seen as a disadvantage in specific applications \citep{bellini2020}.

A general comparison result between the IQR and IER is not possible: $\mathcal{Q}_{\alpha}$ may be smaller than $\mathcal{E}_\alpha$ for some $\alpha$, and larger for other ones. However, such a comparison is possible for symmetric log-concave distributions such as the normal, logistic, uniform or Laplace distribution.
This follows directly from Corollary 7 and the preceding results in \cite{arab}:

\begin{proposition} \label{prop-comp}
Let $F_X$ be a symmetric cdf with finite mean and log-concave density. Then,
\begin{align*}
 \mathcal{E}_\alpha &< \mathcal{Q}_{\alpha} \quad \text{for each } \alpha \in (0, 1/2).
\end{align*}
\end{proposition}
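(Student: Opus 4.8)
The plan is to use symmetry to collapse the two-sided comparison of $\mathcal{E}_\alpha$ and $\mathcal{Q}_\alpha$ into a one-sided comparison of expectiles and quantiles, and then to recast that comparison as a statement about conditional means that log-concavity controls. Assume without loss of generality that $X$ is symmetric about $\mu=0$, since both $\mathcal{E}_\alpha$ and $\mathcal{Q}_\alpha$ are translation invariant. Symmetry gives $e_X(\alpha)+e_X(1-\alpha)=0$ and $q_X(\alpha)+q_X(1-\alpha)=0$, so that $\mathcal{E}_\alpha=2\,e_X(1-\alpha)$ and $\mathcal{Q}_\alpha=2\,q_X(1-\alpha)$; hence $\mathcal{E}_\alpha<\mathcal{Q}_\alpha$ is equivalent to $e_X(\tau)<q_X(\tau)$ for every $\tau\in(1/2,1)$. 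Since $\breve F_X=e_X^{-1}$ is strictly increasing (see (\ref{exp-cdf})), writing $t=q_X(\tau)>0$ turns this into $\breve F_X(q_X(\tau))>F_X(q_X(\tau))=\tau=\breve F_X(e_X(\tau))$, i.e.\ $q_X(\tau)>e_X(\tau)$. As $\tau$ runs through $(1/2,1)$, the point $t$ runs through the positive part of the support, so it suffices to prove $\breve F_X(t)>F_X(t)$ for all $t>0$.

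Next I would rewrite this inequality in terms of conditional means. Using $\breve F_X(t)=E(X-t)_-/E\lvert X-t\rvert$ and $E\lvert X-t\rvert=E(X-t)_++E(X-t)_-$, the inequality $\breve F_X(t)>F_X(t)$ becomes, after clearing the positive denominator, $\bar F_X(t)\,E(X-t)_->F_X(t)\,E(X-t)_+$. Introducing the mean residual life $m(t)=E(X-t\mid X>t)=E(X-t)_+/\bar F_X(t)$ and the mean inactivity time $\bar m(t)=E(t-X\mid X<t)=E(X-t)_-/F_X(t)$, both sides factor and the inequality reduces to the clean statement $\bar m(t)>m(t)$ for $t>0$: for $t>0$ the expected undershoot below $t$, conditional on $X<t$, must exceed the expected overshoot above $t$, conditional on $X>t$.

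I would then finish using two monotonicity consequences of log-concavity together with symmetry. Symmetry gives $m(0)=\bar m(0)=d_X$, the MAD. Log-concavity of the density $f_X$ forces an increasing hazard rate, hence $m$ is nonincreasing (IFR implies DMRL); it also makes $F_X$ log-concave, i.e.\ gives a decreasing reversed hazard rate $f_X/F_X$, which I would use to show that $\bar m$ is strictly increasing on $t>0$. Combining, for every $t>0$ we obtain $m(t)\le m(0)=\bar m(0)<\bar m(t)$, which is exactly $\bar m(t)>m(t)$. The same conclusion also follows from Corollary 7 and the surrounding results in \cite{arab}, but the route above is self-contained.

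The step I expect to require the most care is the strict monotonicity of $\bar m$ (increasing mean inactivity time). The bound $\bar m'(t)\ge 0$ can be extracted from the decreasing reversed hazard rate via the elementary estimate $F_X(x)\le F_X(t)\exp\{-(f_X(t)/F_X(t))(t-x)\}$ for $x\le t$, which integrates to $f_X(t)\int_{-\infty}^t F_X\le F_X(t)^2$ and hence $\bar m'(t)\ge 0$. Strictness then comes from observing that $\bar m$ could be locally constant only if $f_X/F_X$ were constant, i.e.\ $f_X$ increasing, which is impossible for a symmetric (hence non-increasing on $(0,\infty)$) log-concave density. This is precisely where both symmetry and log-concavity are needed, and where the Laplace distribution reveals that the inequality becomes sharp as $\alpha\to 1/2$.
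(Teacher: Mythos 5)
Your proof is correct, but it follows a genuinely different route from the paper: the paper does not prove Proposition \ref{prop-comp} directly at all, it simply invokes Corollary 7 and the preceding results of \cite{arab}, which concern the ordering of $L_p$-quantiles under log-concavity (the quantile/expectile comparison being the case $p=1$ versus $p=2$). Your argument is self-contained and isolates the actual mechanism. The reduction via symmetry to $\breve{F}_X(t)>F_X(t)$ for $t>0$ is sound (both sides of the interexpectile and interquantile ranges collapse by the identities $e_X(\alpha)+e_X(1-\alpha)=0$ and $q_X(\alpha)+q_X(1-\alpha)=0$), and the reformulation as $\bar{m}(t)>m(t)$, comparing the mean inactivity time with the mean residual life, is exactly right: clearing the denominator in (\ref{exp-cdf}) gives $\bar{F}_X(t)E(X-t)_->F_X(t)E(X-t)_+$, which factors as claimed. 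From there the two standard log-concavity implications (log-concave density $\Rightarrow$ log-concave survival function $\Rightarrow$ decreasing mean residual life; log-concave density $\Rightarrow$ log-concave cdf $\Rightarrow$ increasing mean inactivity time), anchored at the symmetry identity $m(0)=\bar{m}(0)=d_X$, finish the job. The one step that genuinely needs care, strictness, you handle correctly: $m$ can be constant on $(0,\infty)$ (Laplace), so strictness must come from $\bar{m}$, and your observation that local constancy of $\bar{m}$ forces a constant reversed hazard rate, hence an exponentially (strictly) increasing density on a subinterval of $(0,\infty)$, contradicting the fact that a symmetric log-concave density is nonincreasing there, closes that gap. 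What the citation buys the paper is generality (the full $L_p$-quantile ordering); what your route buys is transparency and independence from \cite{arab}.
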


In the following, we compare the efficiency of the empirical IER as an estimator of the variability for specific distributions with other measures of dispersion, in particular with the IQR.
Writing  $\widehat{\mathcal{Q}}_{\alpha,n} =\hat{q}_n(1-\alpha)-\hat{q}_n(\alpha)$, where $\hat{q}_n(\alpha)$ is the sample quantile, one has
\begin{equation*}
\sqrt{n}(\widehat{\mathcal{Q}}_{\alpha,n}-\mathcal{Q}_\alpha)
\VK
\mathcal{N}\left(0,\sigma^2_\mathcal{Q}(\alpha)\right),
\end{equation*}
where
\begin{equation*}
\sigma^2_\mathcal{Q}(\alpha) =
\frac{\alpha(1-\alpha)}{f^2(q_X(1-\alpha))}
- \frac{2\alpha^2}{f(q_X(\alpha))f(q_X(1-\alpha))}
+\frac{\alpha(1-\alpha)}{f^2(q_X(\alpha))},
\end{equation*}
and $f$ denotes the density of $F_X$.

If $X\sim \mathcal{N}(\mu,\sigma^2)$, we obtain $\sigma^2_\mathcal{Q}(\alpha) =
2\alpha(1-2\alpha)/f^2(q_X(\alpha))$, and
$\mathcal{Q}(\alpha)=2\sigma\Phi^{-1}(1-\alpha)$. Therefore,
\begin{equation*}
\sqrt{n}\left(\frac{\widehat{\mathcal{Q}}_{\alpha,n}} {2\Phi^{-1}(1-\alpha)} - \sigma\right) \VK
\mathcal{N}\left(0,\tau^2_\mathcal{Q}(\alpha)\sigma^2\right),
\end{equation*}
where
\begin{equation*}
\tau^2_\mathcal{Q}(\alpha) =
\frac{\alpha(1-2\alpha)}{2\left(\Phi^{-1}(1-\alpha)\right)^2 \, \varphi^2\left(\Phi^{-1}(1-\alpha)\right)}.
\end{equation*}
On the other hand, the sample standard deviation
$\widehat{\sigma}_n=(1/(n-1)\sum_{i=1}^n(X_i-\bar{X}_n)^2)^{1/2}$
has asymptotic variance $(\mu_4-\sigma^4)/(4\sigma^2)$, where $\mu_4=E(X-\mu)^4$, which simplifies to $\sigma^2/2$ under normality.
Hence, the standardized asymptotic relative efficiency (ARE) is given by
\begin{equation*}
sARE(\widehat{\mathcal{Q}}_{\alpha,n},\widehat{\sigma}_n) = \frac{1/2}{\tau^2_\mathcal{Q}(\alpha)}.
\end{equation*}
We term $\tau^2_\mathcal{Q}(\alpha)$ the standardized asymptotic variance (standardized ASV).
Proceeding in the same way with the IER leads to the corresponding quantity
$\tau^2_\mathcal{E}(\alpha)=\sigma^2_\mathcal{E}(\alpha)/(2e_X(1-\alpha))^2$.
Figure \ref{fig:sARE} shows the standardized ASV's $\tau^2_\mathcal{Q}(\alpha)$ and $\tau^2_\mathcal{E}(\alpha)$ as functions of $\alpha$.

\begin{figure}
\centering{\includegraphics[width=0.95\textwidth]{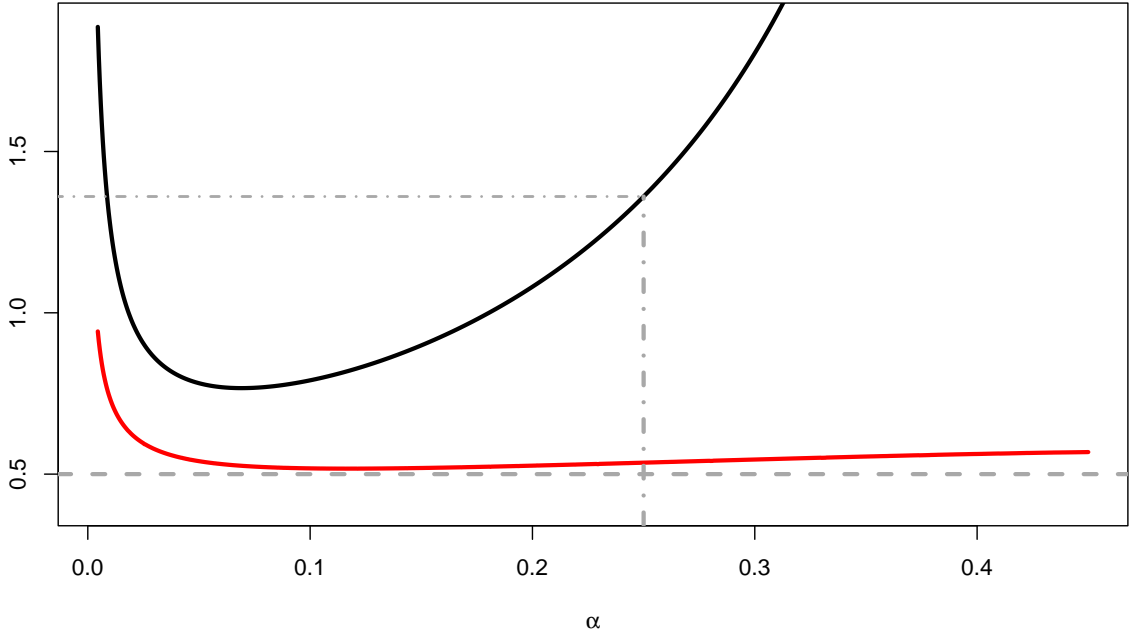}
\caption{\label{fig:sARE} Standardized asymptotic variances $\tau^2_\mathcal{Q}(\alpha)$ (in black) and $\tau^2_\mathcal{E}(\alpha)$ (in red) under normality.}
}
\end{figure}

For the interquartile range, i.e. the choice $\alpha=1/4$, one obtains the well known result $sARE(\widehat{\mathcal{Q}}_{1/4,n},\widehat{\sigma}_n)=0.368$; the standardized ARE takes the maximal value $0.652$ for $\alpha=0.0692$ \citep{fisher,david}. Whereas the latter estimator is more efficient, an advantage of using more central quantiles such as quartiles is their greater stability.

\smallskip
Proceeding to the IER, Fig.\ \ref{fig:sARE} shows that the standardized ASV's are generally smaller and quite stable over a large range of $\alpha$-values. For $\alpha=1/4$, we obtain $sARE(\widehat{\mathcal{E}}_{1/4,n},\widehat{\sigma}_n)=0.934$; the standardized ARE takes the maximal value $0.967$ for $\alpha=0.118$. Hence, the IER is a quite efficient scale estimator under normality compared to the standard deviation.

To analyze the behaviour of the IER under distributions with longer tails than the normal, we consider in the following Student's $t$-distribution with $\nu$ degrees of freedom, denoted by $t_{\nu}$, and having density function $c_{\nu}(1+x^2/\nu)^{-(\nu+1)/2}, x\in\mathbb{R}$, for $\nu>0$, where
$c_{\nu}=\Gamma((\nu+1)/2)/(\sqrt{\nu\pi}\Gamma(\nu/2))$.

Additionally to the dispersion measures used so far, we consider the MAD $d=E\lvert X-EX\rvert$, estimated by
$\widehat{d}_n=1/n \sum_{i=1}^n\lvert X_i-\bar{X}_n\rvert$,
and Gini's mean difference $g=E\lvert X-Y\rvert$, where $Y$ is a independent copy of $X$. The usual estimator of $g$ is the sample mean difference
$\widehat{g}_n=\frac{2}{n(n-1)}\sum_{i<j}\lvert X_i-X_j\rvert $.
The asymptotic variance of $\widehat{d}_n$ is given by
\begin{align*}
 ASV(\widehat{d}_n) &= Var\left(\lvert X-\mu\lvert + (2F(\mu)-1)X\right)
\end{align*}
\citep[Example 19.25]{vdw98}, which simplifies to
$ASV(\widehat{d}_n)=\sigma^2-d^2$ for symmetric distributions.
In this case, the ASV is the same as for the sample MAD around the median $1/n\sum_{i=1}^n\lvert X_i-q_X(1/2)\rvert $ \citep{gerst}. An explicit expression for the ASV of $\widehat{g}_n$ under the $t$-distribution can be found in \cite{gerst}, Table 3.

Similarly as above, given a scale measure $\delta$ and its estimator $\hat{\delta}$, we call the ratio $ASV(\hat{\delta})/\delta^2$ standardized asymptotic variance. Table \ref{table1} shows the standardized ASV of the different estimators for the $t$-distribution with various degrees of freedom.

\begin{table}
\centering
\begin{tabular}{lccccccccc}
  \toprule 
 & $\widehat{\sigma}$ & $\widehat{\mathcal{E}}_{0.15}$
 & $\widehat{\mathcal{E}}_{0.25}$ & $\widehat{\mathcal{E}}_{0.35}$
 & $\widehat{d}$ & $\widehat{g}$ & $\widehat{\mathcal{Q}}_{0.15}$
 & $\widehat{\mathcal{Q}}_{0.25}$ & $\widehat{\mathcal{Q}}_{0.35}$ \\
  \midrule 
$t_{3}$  & - & 2.020 & 1.651 & 1.517 & 1.467 & 1.907 & 1.330 & 1.613 & 2.726 \\
$t_{4}$  & - & 1.198 & 1.057 & 1.012 & 1.000 & 1.165 & 1.200 & 1.540 & 2.685 \\
$t_{5}$  & 2.000 & 0.949 & 0.871 & 0.851 & 0.851 & 0.932 & 1.129 & 1.500 & 2.661 \\
$t_{6}$  & 1.250 & 0.832 & 0.782 & 0.774 & 0.778 & 0.820 & 1.085 & 1.475 & 2.646 \\
$t_{7}$  & 1.000 & 0.765 & 0.730 & 0.728 & 0.735 & 0.754 & 1.055 & 1.457 & 2.636 \\
$t_{8}$  & 0.875 & 0.721 & 0.696 & 0.698 & 0.707 & 0.712 & 1.034 & 1.444 & 2.628 \\
$t_{9}$  & 0.800 & 0.690 & 0.672 & 0.677 & 0.687 & 0.682 & 1.017 & 1.434 & 2.622 \\
$t_{10}$ & 0.750 & 0.667 & 0.655 & 0.661 & 0.672 & 0.659 & 1.004 & 1.426 & 2.617 \\
$t_{12}$ & 0.688 & 0.636 & 0.630 & 0.639 & 0.651 & 0.629 & 0.986 & 1.415 & 2.611 \\
$t_{15}$ & 0.636 & 0.608 & 0.608 & 0.619 & 0.632 & 0.601 & 0.967 & 1.403 & 2.604 \\
$t_{20}$ & 0.594 & 0.583 & 0.587 & 0.601 & 0.615 & 0.575 & 0.949 & 1.392 & 2.597 \\
$t_{30}$ & 0.558 & 0.559 & 0.569 & 0.584 & 0.599 & 0.552 & 0.932 & 1.382 & 2.590 \\
$t_{40}$ & 0.542 & 0.549 & 0.560 & 0.576 & 0.592 & 0.541 & 0.924 & 1.376 & 2.587 \\
$t_{50}$ & 0.533 & 0.543 & 0.555 & 0.572 & 0.587 & 0.535 & 0.919 & 1.373 & 2.585 \\
$t_{100}$& 0.516 & 0.531 & 0.545 & 0.563 & 0.579 & 0.523 & 0.909 & 1.367 & 2.581 \\
  \bottomrule 
\end{tabular}
\caption{\label{table1} Standardized ASV of different scale estimators for the $t_{\nu}$-distribution.}
\end{table}

Concerning the comparison between the interquantile and interexpectile range, we observe a similar behavior as for the normal distribution. Whereas the standardized ASV's vary strongly with $\alpha$ for the IQR, they are quite stable for the IER. To allow for a better comparison of the relative efficiencies, Table \ref{table2} shows the minimum of the standardized ASV's in each line of Table \ref{table1}, divided by the standardized ASV's. Hence, for each distribution, the table shows the sARE with respect to the most efficient estimator. Whereas $\widehat{\mathcal{Q}}_{0.15}$ and $\widehat{d}$ are most efficient for $\nu=3$ and $\nu=4,5$, respectively, the IER gets in the lead for degrees of freedom between 6 and 10. For higher degrees of freedom, Gini's mean difference and the standard deviation are most efficient.

\begin{table}
\centering
\begin{tabular}{lccccccccc}
  \toprule 
 & $\widehat{\sigma}$ & $\widehat{\mathcal{E}}_{0.15}$
 & $\widehat{\mathcal{E}}_{0.25}$ & $\widehat{\mathcal{E}}_{0.35}$
 & $\widehat{d}$ & $\widehat{g}$ & $\widehat{\mathcal{Q}}_{0.15}$
 & $\widehat{\mathcal{Q}}_{0.25}$ & $\widehat{\mathcal{Q}}_{0.35}$ \\
  \midrule 
$t_{3}$  &  -    & 0.659 & 0.806 & 0.877 & 0.907 & 0.698 & \textbf{1.000} & 0.825 & 0.488 \\
$t_{4}$  &  -    & 0.835 & 0.946 & 0.988 & \textbf{1.000} & 0.859 & 0.834 & 0.649 & 0.372 \\
$t_{5}$  & 0.425 & 0.896 & 0.976 & 0.999 & \textbf{1.000} & 0.913 & 0.753 & 0.567 & 0.320 \\
$t_{6}$  & 0.619 & 0.930 & 0.989 & \textbf{1.000} & 0.995 & 0.944 & 0.713 & 0.525 & 0.292 \\
$t_{7}$  & 0.728 & 0.952 & 0.997 & \textbf{1.000} & 0.991 & 0.965 & 0.690 & 0.500 & 0.276 \\
$t_{8}$  & 0.796 & 0.966 & \textbf{1.000} & 0.998 & 0.985 & 0.978 & 0.674 & 0.482 & 0.265 \\
$t_{9}$  & 0.840 & 0.974 & \textbf{1.000} & 0.993 & 0.978 & 0.986 & 0.660 & 0.469 & 0.256 \\
$t_{10}$ & 0.873 & 0.981 & \textbf{1.000} & 0.990 & 0.974 & 0.993 & 0.652 & 0.459 & 0.250 \\
$t_{12}$ & 0.915 & 0.988 & 0.998 & 0.984 & 0.966 & \textbf{1.000} & 0.638 & 0.444 & 0.241 \\
$t_{15}$ & 0.944 & 0.988 & 0.989 & 0.970 & 0.950 & \textbf{1.000} & 0.621 & 0.428 & 0.231 \\
$t_{20}$ & 0.969 & 0.987 & 0.980 & 0.957 & 0.936 & \textbf{1.000} & 0.606 & 0.413 & 0.222 \\
$t_{30}$ & 0.990 & 0.987 & 0.971 & 0.945 & 0.922 & \textbf{1.000} & 0.592 & 0.400 & 0.213 \\
$t_{40}$ & 0.999 & 0.986 & 0.967 & 0.939 & 0.915 & \textbf{1.000} & 0.586 & 0.393 & 0.209 \\
$t_{50}$ & \textbf{1.000} & 0.982 & 0.960 & 0.931 & 0.907 & 0.996 & 0.580 & 0.388 & 0.206 \\
$t_{100}$& \textbf{1.000} & 0.972 & 0.946 & 0.916 & 0.891 & 0.986 & 0.567 & 0.377 & 0.200 \\
  \bottomrule 
\end{tabular}
\caption{\label{table2}
sARE of different scale estimators with respect to the most efficient estimator for the $t_{\nu}$-distribution.}
\end{table}

As a final example, we use the normal inverse Gaussian (NIG) distribution \citep{barndorff}, which allows for skewed and heavy-tailed distributions; further, all moments exist and have simple explicit expressions.
Table \ref{table3} shows the standardized ASV's of the different estimators for the NIG distribution with shape parameters $\alpha$ and $\beta$. For $\beta=0$, the distribution is symmetric. The two remaining parameters are chosen such that $EX=0$ and $Var(X)=1$; hence, the third and fourth moment in columns 3-4 corresponds to the moment skewness and kurtosis. Note that, for $\beta=0$ and $\alpha\to\infty$, the distribution converges to the standard normal.

\begin{table}
\centering
\begin{tabular}{rccc|ccccc}
  \toprule
   $\alpha$ & $\beta$ & $m_3$ & $m_4$ & $\widehat{\sigma}$ & $\widehat{\mathcal{E}}_{0.25}$
 & $\widehat{d}$ & $\widehat{g}$ & $\widehat{\mathcal{Q}}_{0.25}$ \\
 \midrule
      10.0 & 0.0 & 0.00 & 3.03 & 0.507 & 0.540 & 0.575 & 0.517 & 1.364 \\
      10.0 & 8.0 & 0.67 & 3.82 & 0.706 & 0.654 & 0.679 & 0.620 & 1.009 \\
      10.0 & 9.0 & 1.42 & 6.52 & 1.381 & 1.017 & 1.016 & 0.938 & 0.713 \\
      2.0 & 0.0 & 0.00 & 3.75 & 0.688 & 0.642 & 0.662 & 0.640 & 1.433 \\
      2.0 & 1.0 & 1.00 & 5.67 & 1.167 & 0.882 & 0.881 & 0.848 & 0.990 \\
      2.0 & 1.5 & 2.57 & 15.73 & 3.684 & 1.979 & 1.891 & 1.735 & 0.655 \\
      1.0 & 0.0 & 0.00 & 6.00 & 1.250 & 0.910 & 0.884 & 0.947 & 1.602 \\
      1.0 & 0.5 & 2.00 & 13.67 & 3.167 & 1.654 & 1.564 & 1.531 & 1.020 \\
      1.0 & 0.8 & 6.67 & 85.41 & 21.102 & 7.112 & 6.607 & 5.379 & 0.877 \\
  \bottomrule
\end{tabular}
\caption{\label{table3} Standardized ASV of different scale estimators for the NIG-distribution.}
\end{table}

From Table \ref{table3} we see that $\widehat{\mathcal{E}}_{0.25}, \widehat{d}$ and $\widehat{g}$ have comparable standardized ASV's. They are all quite efficient compared to $\widehat{\sigma}$ for distributions near to the normal. They are considerably more efficient than $\widehat{\sigma}$ for skewed and long-tailed distributions as the NIG distribution with $\alpha=1, \beta=0.5$, and compare well with the IQR in this case. All in all, these measures seem to be a reasonable compromise between the standard deviation on the one hand, and the interquartile range on the other.

\subsubsection*{Acknowledgments}
We thank the two anonymous reviewers for their constructive and helpful comments. In particular, we thank one of the reviewers for pointing to the result stated as Prop. \ref{prop-comp}.

\end{document}